\newcommand{\dd }{\mathrm{d}}
 \journalname{Zeitschrift f\"{u}r angewandte Mathematik und Physik}
\begin{document}
\allowbreak
\title{Delta Shock as Free Piston in Pressureless Euler Flows\thanks{This work is supported by the National Natural Science Foundation of China under Grants No. 11871218, No. 12071298, and by the Science and Technology Commission of Shanghai Municipality under Grant No. 18dz2271000.}
}
%\subtitle{Do you have a subtitle?\\ If so, write it here}

\titlerunning{Delta Shock as Free Piston }

\author{Le Gao        \and Aifang Qu \and Hairong Yuan
}

\authorrunning{Gao, Qu and Yuan} % if too long for running head

\institute{Le Gao \at
              Center for Partial Differential Equations, School of Mathematical Sciences, East China Normal University, Shanghai 200241, China   \\
              \email{51195500061@stu.ecnu.edu.cn}
           \and
          Aifang Qu \at
              Department of Mathematics, Shanghai Normal University, Shanghai  200234,  China\\
    \email{afqu@shnu.edu.cn}
    \and
 Hairong Yuan \at
              School of Mathematical Sciences and Shanghai Key Laboratory of Pure Mathematics and Mathematical  Practice, East China Normal University, Shanghai 200241, China  \\
              \email{hryuan@math.ecnu.edu.cn}
}

\date{Received: date / Accepted: date}
% The correct dates will be entered by the editor

\maketitle

\begin{abstract}
We establish the equivalence of free piston  and delta shock, for the one-space-dimensional pressureless compressible Euler equations. The delta shock appearing in the singular Riemann problem is exactly the piston that may move freely forward or backward in a straight tube, driven by the pressureless Euler flows on two sides of it in the tube.    This result not only helps to understand the physics of the somewhat mysterious delta shocks, but also provides a way to reduce the fluid-solid interaction problem, which consists of several initial-boundary value problems coupled with moving boundaries, to a simpler Cauchy problem.  We show the equivalence from three different perspectives. The first one is from the sticky particles, and derives the ordinary differential equation (ODE) of the trajectory of the piston by a straightforward application of conservation law of momentum, which is physically simple and clear. The second one is to study a coupled initial-boundary value problem of pressureless Euler equations, with the piston as a moving boundary following the Newton's second law. It depends on a concept of Radon measure solutions of initial-boundary value problems of the compressible Euler equations which enables us to calculate the force on the piston given by the flow. The last one is to solve directly the singular Riemann problem and obtain the ODE of delta shock by the generalized Rankine-Hugoniot conditions. All the three methods lead to the same ODE. %{The equivalence of a free piston and a delta shock for polytropic gases is also given for piecewise smooth solutions.}

\keywords{Compressible Euler equations \and Radon measure solution \and delta shock \and Riemann problem \and piston problem}
% \PACS{PACS code1 \and PACS code2 \and more}
\subclass{35R06 \and 35L04 \and 35Q70 \and 35R37  \and 76N30 }
\end{abstract}

\section{Introduction}\label{sec1}

The piston problem, serving as an important typical physical model in mathematical fluid dynamics, has been studied extensively in the past decades; See, for example,  \cite{courant1976supersonicflowandshockwaves, chen2005amultidimensionalpiston, chen2008globalexistenceofshock,Chen2019Piston,Liu1978freepiston} and { references} therein. There are many different situations. The classical one {is that}
the piston moves with given speed and drives the gas in front of it in a tube. In this paper, we consider the free piston problem, for which the piston is a finite body, with force given by gas which is filled in the tube, both in front of the piston and behind of it. Then the piston's motion is determined by the gas flow, which in turn affects the flow field.
Liu \cite{Liu1978freepiston} had studied {the weak} entropy solutions of free piston problem for the $p$-system, i.e., the barotropic compressible  Euler equations formulated  in the Lagrangian coordinates. We will study instead a free piston problem for the following pressureless Euler equations:
\begin{equation}\label{1.1}
\left \{
\begin{split}
&\partial_{t}\rho+\partial_{x}(\rho u)=0,\\
&\partial_{t}(\rho u)+\partial_{x}(\rho u^{2})=0,\\
\end{split} \right.
\end{equation}
where $t\geq0$ is a time variable, and $x\in \Bbb R$ is a space variable; $\rho$,  $u$ represent respectively the density of mass, and the velocity of the gas. {Hereafter}, for convenience, we also write $U=(\rho, u)^\top$, which is a vector-valued function of $(x,t)$. These equations are derived from the integral equations describing conservation laws of mass and momentum, under the assumption that the flow field is quite smooth. In \cite{Qu2020Hypersonic}, it has been shown that under suitable scaling, the pressureless flow is actually a hypersonic limiting flow.  If the speed of the piston is very large comparing to the flow, in the high Mach number limit, polytropic gas may also be considered as pressureless flow; See \cite{Qu2020High}. We also recommend \cite{lizhangyang} for a very nice introduction to the background and theory of the system \eqref{1.1}.

One motivation of studying free piston problem of pressureless Euler flow is to present a new and clear physical interpretation of delta shocks, which are measures concentrated on space-time curves modeling concentration of mass and momentum. It has been a focused subject of research in the past thirty years from the theoretical perspective of systems of hyperbolic conservation laws, since the work \cite{Tan1994Two} of Tan and Zhang.
The solvability and wave structures involving delta shocks of Riemann problems for pressureless gas and other hyperbolic conservation laws in one or two space dimensions have been thoroughly analyzed. See, for example \cite{Li1998The,Sheng1999The,Yang2012New,lizhangyang}. There are also many works devoted to showing the reasonability of delta shocks, such as  \cite{Chen2003Formation,Li2001Note} for the formation of delta shocks when pressure or temperature of polytropic gases decreases to zero.
Up to now, several mathematical theories on Cauchy problems of pressureless Euler equations have  been established by different methods for different initial data, see, for instance,   \cite{Bouchut1994On,  E1996Generalized, Huang2001Well,liw2003,chenglizhang1997}.

Unlike classical elementary waves of {the Euler} equations such as shock waves or rarefaction waves, delta shocks are introduced mathematically to solve certain Riemann problems and its physical relevance is always worth of investigation. In a private communication {with Professor} Jiequan Li, he suggested {that a} natural way  to understand delta shock is to regard it as a free piston which sticks all the gas particles impinging on it. Obviously, this requires to take into account of the interactions of the piston and the flow, thus a rigorous definition of Radon measure solutions of initial-boundary value problems of compressible Euler equations is indispensable for the mathematical analysis. However, there is no any  work on Radon measure solutions of boundary value problems of Euler equations priori to Qu, Yuan and Zhao's paper \cite{Qu2020Hypersonic}. In subsequent papers \cite{Qu2020High,Qu2020Measure,Jin2019On,jinquyuancpaa,Quwangyuancms,quyuanjde,jinquyuan2021riemann}, the authors developed {the ideas} in \cite{Qu2020Hypersonic} and solved explicitly some typical problems, including hypersonic flow passing wedges/cones, {piston problems, and Riemann problems, for polytropic gases, Chaplygin gas and pressureless gas. Particularly, the fundamental Newton's sine-squared pressure law and Newton-Busemann formula for hypersonic aerodynamics are rigorously proved. These works have fully demonstrated the great role that measure solutions play in the study of hyperbolic conservation laws, and a proper definition of measure solutions is of vital importance as lots of physical problems can only be solved with its help. The success enables us to treat the free piston problem here.

We demonstrate that delta shock could be regarded as a free piston in pressureless flow by treating the problem via three different approaches. (To this purpose, it is enough to assume that the flow ahead of and behind  the piston are both constant initially.) The first one is regarding the pressureless flow as sticky particles and deriving the ODE of the position of the piston directly from conservation of momentum. It has the merit to be physically straightforward and short. The second one is to study the Radon measure solutions of two initial-boundary value problems that are coupled with a common moving boundary. The two initial-boundary value problems model the pressureless flows in the tube,  and the piston, i.e., the moving boundary, is solved by Newton's second law, with forces calculated from Radon measure solutions describing the flow fields. The third one is to solve a singular Riemann problem of pressureless Euler equations, whose delta shock solution is given by  the generalized Rankine-Hugoniot conditions, which in turn are derived from the definition of Radon measure solutions for a Cauchy problem. All these approaches lead to the same ODE, which manifests the equivalence of free piston and delta shock. For completeness, in the Appendix we list the results for the six cases of the free piston problem without detailed computations, which are the same as those for delta shocks presented in \cite[Section 3.4]{lizhangyang}.

We remark that except the solution of the free piston problem and the new  interpretation of delta shocks as free pistons, a notable novelty of this paper is the fact that a solid-fluid interaction problem can be dealt in a unified way by solving an initial value problem of the compressible Euler system in the larger class of admissible solutions such as Radon measures. It is worth pointing out that although the latter two methods look somewhat complicated for the simple model problem studied in the paper, they have the merit that could be extended to deal with similar problems with general initial data and structures.

The rest of the paper is organized as follows. In section \ref{s2}, we formulate the free piston problem, the singular Riemann problem, and state main theorems for two  fundamental  cases. Section \ref{sec3} is devoted to deriving the differential equations of the piston's trajectory and delta shocks, and thus verify the equivalence of a free piston and a delta shock. %Section \ref{s4} shows the equivalence for polytropic gas.
Finally, the Appendix  presents the explicit expressions of the piston's motion/delta shock.% for pressureless flows.

\section{The free piston problem, Riemann problem and main results}\label{s2}

In this section, after formulation of the free piston problem and singular Riemann problem for the pressureless Euler equations, we present the main results of the paper, which will be proved in the next section.

\subsection{The free piston problem}\label{s2.1}
System \eqref{1.1} models pressureless gas in a thin and straight tube with constant cross-sections, identified with the horizontal $x$-axis. There is a solid body (piston) with thickness $l>0$ and mass $m_{0}>0$ in the tube, thus divides the tube into two parts, each of which is filled with uniform pressureless gas, with density $\rho_1$ and velocity $u_1$ in the left part, and density $\rho_2$, velocity $u_2$ in the right part.  %The velocity of the piston is $u_0$.
Without loss of generality, we assume that the cross-section of the piston as well as the tube is of unit area.  The  piston  is driving by the force exerting on it by the pressureless flow, whose particles stick on the piston once they meet it, via perfectly inelastic collisions.  By the Galilean principle of relativity, we may assume that initially the  piston's left side locates at the origin, with speed $u_0$.
Thus the initial condition of the flow is
\begin{equation}\label{1.2}
U(x,0)=
\begin{cases}
 (\rho_{1},u_{1})^{\top},&\mbox{if~}x<0, \\
 (\rho_{2},u_{2})^{\top},&\mbox{if~}x>l,
\end{cases}
\end{equation}
where $u_{1},~\rho_{1},~u_{2},~\rho_{2}$ are all given constants, with $\rho_1, \rho_2$ being nonnegative.

The location of the piston is an unknown function of time $t$, given by
$x=x_{1}(t)$  (resp. $x=x_{2}(t)$),  which represents the position of the piston's left  (resp. right) side. We assume the piston is solid, hence there is no any change of its length for all the time. Therefore there holds
\begin{equation}\label{1.3}
\begin{split}
x_{2}(t)=x_{1}(t)+l,\ \ \ \forall t\ge0; \qquad x_1(0)=0, \qquad x'_1(0)=u_0.
\end{split}
\end{equation}
On the surface of the piston, we propose the natural condition for perfectly inelastic collisions
 \begin{equation}\label{2.2}
u(x_1(t),t)=x_{1}'(t)=x_{2}'(t)=u(x_2(t),t), \quad\forall\, t\ge0,
\end{equation}
which means that particles on the piston's surface will move along with the piston.
The piston suffers inertial forces caused by the flows hitting the piston, and we set $w_{p}^{1}(t),~w_{p}^{2}(t)$ be the inertial forces (pressure) on the left and right side of the piston respectively.
By Newton's second law, there should hold
\begin{align}\label{eq5}
x''_1(t)=\frac{1}{m_0}\big({w_p^1(t)-w_p^2(t)}\big).
\end{align}
Denote the space-time region on the left and right side of the piston respectively by
\begin{align}\label{eq6add}
\Omega_{1}\doteq\{(x,t)\in \Bbb R^{2}: \ \ t>0, x<x_{1}(t)\},
\end{align}
and
\begin{align}\label{eq7add}
\Omega_{2}\doteq\{(x,t)\in \Bbb R^{2}: \ \ t>0, x>x_{2}(t)\}.
\end{align}
The free piston problem considered in this paper is to determine the motion of the piston $x=x_1(t)$ from \eqref{eq5}, with state $U_1$ in $\Omega_1$ and state $U_2$  in $\Omega_2$, satisfying the equations \eqref{1.1}, subjected to the boundary conditions \eqref{1.2}--\eqref{2.2}.

\subsection{The (singular) Riemann problem}\label{s2.2}
We also formulate the following measure $\bar{\varrho}$ and function $\bar{u}$ that is $\bar{\varrho}$-measurable, as initial data for \eqref{1.1}:
\begin{align}\label{eq6}
  \bar{\varrho}=\rho_1\mathcal{L}^1\lfloor\{x<0\}+m_0\delta_0+\rho_2\mathcal{L}^1\lfloor\{x>0\},  \bar{u}=u_1\mathsf{I}_{\{x<0\}}+u_0\mathsf{I}_{\{x=0\}}+u_2\mathsf{I}_{\{x>0\}}.
\end{align}
Here $\mathcal{L}^1$ is the Lebesgue measure on the real line $\mathbb{R}$, $\delta_0$ is the Dirac measure supported at the origin, and $\mathsf{I}_A$ is the indicator function of a set $A$, while $m\lfloor A$ is the measure obtained by restricting a measure $m$ on an $m$-measurable set $A$. We call the Cauchy problem \eqref{1.1}\eqref{eq6} as a singular Riemann problem, since if $m_0=0$ and $u_0=0$, it is reduced to the well-known Riemann problem. Formally, taking the piston as a point (that means, using the partition $\big\{\{x\}_{x\not\in[0,l]\}}, [0,l]\big\}$ of $\mathbb{R}$ to obtain a quotient topological space, which is homeomorphic to $\mathbb{R}$), the free piston problem looks the same as the singular Riemann problem. We will show they are actually the same. For this purpose, we {need} the following concept of Radon measure solutions for the singular Riemann problem \eqref{1.1}\eqref{eq6}.  Firstly, we recall that by Riesz representation theorem,  $\mathscr{M}(\mathbb{R})$,  the space of Radon measures on $\mathbb{R}$, is dual to $C_c(\mathbb{R})$, space of continuous functions with compact supports. Equipping $\mathscr{M}(\mathbb{R})$ with the weak* topology, $C([0,+\infty); \mathscr{M}(\mathbb{R}))$ is the space of continuous mappings from $[0,+\infty)$ to $\mathscr{M}(\mathbb{R})$. As usual, we use $\langle m,\phi\rangle$ to denote the action of a measure $m$ on a test function $\phi$.

\begin{definition}\label{def1}
Let $\varrho,~m,~n\in C([0,+\infty); \mathscr{M}(\mathbb{R}))$ with $\varrho(t)$
nonnegative for all $t\ge0$.
We call $(\varrho,u)$  a (Radon) measure solution to the singular Riemann problem {\eqref{1.1}\eqref{eq6}}, provided that the following are valid:
\par i) for any $\phi(x,t)\in C_{c}^{1}(\Bbb R^2)$ (continuously differentiable functions with compact supports),  there hold
\begin{equation}\label{2.43}
\begin{split}
 \int_{0}^{+\infty}\langle \varrho(t),\phi_t(\cdot,t)\rangle\, \dd t&+\int_{0}^{+\infty}\langle m(t),\phi_x(\cdot,t)\rangle\,\dd t+\langle \varrho(0),\phi(\cdot,0)\rangle =0,\\
\int_{0}^{+\infty}\langle m(t),\phi_t(\cdot,t)\rangle\,\dd t&+\int_{0}^{+\infty}\langle n(t),\phi_x(\cdot,t)\rangle\,\dd t
+\langle m(0),\phi(\cdot,0)\rangle=0,
\end{split}\end{equation}
where $\varrho(0)=\bar{\varrho},$ and $m(0)=\bar{u}\bar{\varrho}=\rho_1u_1\mathcal{L}^1\lfloor\{x<0\}+m_0u_0\delta_0+\rho_2u_2\mathcal{L}^1\lfloor\{x>0\}$.
\par ii) $\forall\, t\ge0$, $m(t)$ and $n(t)$ are aboslutly continuous with respect to $\varrho(t)$, with the  Radon-Nikodym derivatives satisfying
\begin{equation}\label{2.44}
\begin{split}
&u(x,t)\doteq\dfrac{\dd m(t)(x)}{\dd\varrho(t)(x)},\quad u(x,t)^2=\dfrac{\dd n(t)(x)}{\dd\varrho(t)(x)}.
\end{split}
\end{equation}
\end{definition}
\qed

This definition is consistent with the familiar integrable weak solutions when $\varrho(t)$ is
absolutely continuous with respect to the Lebesgue measure $\mathcal{L}^1$ for all $t\ge0$, with its Radon-Nikodym derivative $\rho(x,t)$. We note that $u(x,t)$ does not make sense on a $\varrho$-null open set (i.e. interior of vacuum).

Let $x=x(t)$ be a Lipschitz function for $t\ge0$. Radon measure solutions to the singular Riemann problem {\eqref{1.1}\eqref{eq6}} of the form
\begin{align}\label{eq9}
&\varrho(t)=\tilde{\rho}_1\mathcal{L}^{1}\lfloor\{x: x<x(t)\}+\tilde{\rho}_2\mathcal{L}^{1}\lfloor\{x: x>x(t)\} +\alpha(t)\delta_{x(t)},\\
&u(x,t)=\tilde{u}_1\mathsf{I}_{\{x: x<x(t)\}}(x) +\tilde{u}_2\mathsf{I}_{\{x: x>x(t)\}}(x)+x'(t)\mathsf{I}_{\{x(t)\}}(x)\label{eq10}
\end{align}
is called a delta shock solution, where $\tilde{\rho}_{1,2}\ge0$ and $\tilde{u}_{1,2}$ are constants; $\alpha(t)$ is a nontrivial nonnegative function for $t\ge0$, and $\delta_{x(t)}$ is the Dirac measure on $\mathbb{R}$, supported at the point $\{x(t)\}$. The curve $x=x(t)$ is also called {a delta} shock. It satisfies the over-compressing entropy condition \cite{Bouchut1994On}, provided that
\begin{align}\label{eq11}
  \tilde{u}_1\ge x'(t)\ge \tilde{u}_2, \quad\forall t\ge0,
\end{align}
which means no particle releasing from the delta shock. Such delta shock solutions are called entropy solutions.

\subsection{Equivalence of delta shocks and free pistons}\label{sec23}

It is obvious that there are six cases for the free piston problem or singular Riemann problem, depending on the relations of initial velocities:
\begin{equation*}
  \begin{split}
     & {\rm Case ~1}: ~u_2<u_0<u_1; \quad {\rm Case~}  2:  u_0<u_1\leq u_2 ; \quad {\rm Case~}  3:  u_0<u_2<u_1 ; \\
       & {\rm Case~}  4: ~ u_1<u_2< u_0;\quad {\rm Case~}  5: u_2\leq u_1<u_0;\quad {\rm Case~}  6:  u_1\leq u_0\leq u_2 .
  \end{split}
\end{equation*}
For Case 1, gas concentrates on both sides of the piston and no vacuum presents for any $t>0$. The velocity of the piston will be in the interval $(u_2,u_1)$ and thus satisfies the entropy condition \eqref{eq11}. For Case 2, there is gas concentrated on the left side of the piston and no particle sticks to the piston on the right side of it, where there is a vacuum  between the piston and the uniform gas. The velocity of the piston will be in the interval $(u_0,u_1)$ and thus satisfies \eqref{eq11} in a weak sense (with $\tilde{u}_2$ not defined). These two cases are fundamental as the other cases  could be reduced to either one or a combination of them by a Galilean transformation and a change of orientation of the coordinates. More precisely, for Case 3, at first, the concentration of gas only happens on the left side of the piston and there is a vacuum domain between the piston and the gas on the right. As time goes on, the piston speeds up and will catch up with the gas on the right at some time $t_1$. (Notice that delta shock solutions of Riemann problems might not be self-similar). After that time  the problem is reduced to that of Case 1. By changing $x$ to $-x$, thus $u$ to $-u$, Cases 4 and 5 can be reduced to Cases 2 and 3 respectively. Case 6 is trivial, in the sense that there is no gas concentrated on either side of the piston. There are vacuum near both sides of the piston and the velocity of the piston will be unchanged.

We have the following results on  the trajectory of piston for  Case 1: $ u_2<u_0<u_1$ and Case 2: $u_0<u_1\leq u_2$.
\begin{theorem}\label{t2.1}
For the free piston problem \eqref{1.1}-\eqref{eq5}, with the mass of the piston $m_0>0$ and length $l>0$, if $u_2< u_0 <u_1$, then the trajectory {of the} piston's left-side $x=x_{1}(t)$ satisfies the following equations
\begin{equation}\label{e2.1}
\left \{
\begin{split}
&x_{1}''(t)+\dfrac{( {\rho_1}- {\rho_2})x_{1}'(t)-2( {\rho_1} {u_1} - {\rho_2} {u_2})}{( {\rho_1}- {\rho_2})x_{1}(t)-( {\rho_1} {u_1} - {\rho_2} {u_2})t-m_{0}}x_{1}'(t)\\
&\qquad\qquad+\dfrac{ {\rho_1} {u_1}^{2}- {\rho_2} {u_2}^{2}}{( {\rho_1} - {\rho_2})x_{1}(t)-( {\rho_1} {u_1}- {\rho_2} {u_2})t-m_{0}}=0,\\	&x_{1}(0)=0,~x_{1}'(0)=u_0.
\end{split} \right.
\end{equation}
On the left (resp. right) side of the piston, the flow is of constant state $(\rho_1, u_1)^{\top}$ (resp. $(\rho_2, u_2)^{\top}$) given by the initial data.
\end{theorem}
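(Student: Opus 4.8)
The plan is to prove Theorem~\ref{t2.1} through the ``sticky particles'' picture, which is the most transparent of the three routes announced in the introduction; the other two (the coupled initial--boundary value problems of Section~\ref{sec3}, and the singular Riemann problem) run the same bookkeeping in the language of Radon measure solutions and will yield the identical ODE. The first task is to pin down the solution structure. Under the hypothesis $u_2<u_0<u_1$ I would show that the solution is exactly the configuration \eqref{eq9}--\eqref{eq10} with $\tilde\rho_i=\rho_i$, $\tilde u_i=u_i$, $x(t)=x_1(t)$, together with the solid piston occupying $[x_1(t),x_2(t)]$: the flow stays in the constant state $(\rho_1,u_1)^\top$ throughout $\Omega_1$ and $(\rho_2,u_2)^\top$ throughout $\Omega_2$, while every particle that reaches the piston is absorbed. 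For \eqref{1.1} the characteristics carrying constant Cauchy data are parallel straight lines, so no rarefaction or secondary wave can arise in the interiors of $\Omega_1,\Omega_2$; and once the (still to be proved) inequalities $u_1>x_1'(t)>u_2$ are known, those characteristics all run into the moving faces $x=x_1(t)$ and $x=x_2(t)=x_1(t)+l$, nothing is emitted from the boundary back into the flow, and \eqref{2.2} is automatically respected. Thus the whole solution is determined by the single unknown $x_1(t)$.

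Next I would carry out the mass and momentum balance for the composite body made of the solid piston together with all gas stuck to it up to time $t$. Across the left face $x=x_1(t)$, moving with speed $x_1'(t)<u_1$, gas of state $(\rho_1,u_1)$ enters at mass rate $\rho_1(u_1-x_1'(t))$ and $x$-momentum rate $\rho_1u_1(u_1-x_1'(t))$; across the right face $x=x_2(t)$, moving with speed $x_1'(t)>u_2$, gas of state $(\rho_2,u_2)$ enters at mass rate $\rho_2(x_1'(t)-u_2)$ and $x$-momentum rate $\rho_2u_2(x_1'(t)-u_2)$ (no pressure terms, the flow being pressureless). Integrating in time and using $\int_0^t x_1'(s)\,\dd s=x_1(t)$, the total mass is
\[
m(t)=m_0-(\rho_1-\rho_2)x_1(t)+(\rho_1u_1-\rho_2u_2)t=-\big[(\rho_1-\rho_2)x_1(t)-(\rho_1u_1-\rho_2u_2)t-m_0\big],
\]
precisely minus the denominator in \eqref{e2.1}, and $m(t)\ge m_0>0$ since both absorbed masses are nonnegative. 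The body moves rigidly with velocity $x_1'(t)$, so its momentum is $m(t)x_1'(t)$; equating this to $m_0u_0$ plus the time-integrated momentum influx gives
\[
m(t)x_1'(t)=m_0u_0+\rho_1u_1\big(u_1t-x_1(t)\big)+\rho_2u_2\big(x_1(t)-u_2t\big).
\]
Differentiating once in $t$, substituting $m'(t)=(\rho_1u_1-\rho_2u_2)-(\rho_1-\rho_2)x_1'(t)$, and dividing by $m(t)$ turns this into $m(t)x_1''(t)=(\rho_1-\rho_2)x_1'(t)^2-2(\rho_1u_1-\rho_2u_2)x_1'(t)+\rho_1u_1^2-\rho_2u_2^2$, which is exactly the ODE in \eqref{e2.1}; the initial data $x_1(0)=0$, $x_1'(0)=u_0$ come from \eqref{1.3}. (Equivalently, this differentiated balance is Newton's law \eqref{eq5} with the pressures $w_p^{1,2}$ read off from the momentum fluxes.)

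Finally I would close the loop by showing $u_2<x_1'(t)<u_1$ for every solution of \eqref{e2.1}. The integrated momentum balance rewrites $x_1'(t)$ as the convex combination $x_1'(t)=\big(m_0u_0+w_1(t)u_1+w_2(t)u_2\big)/m(t)$ with nonnegative weights $w_1(t)=\rho_1\int_0^t(u_1-x_1')\,\dd s$, $w_2(t)=\rho_2\int_0^t(x_1'-u_2)\,\dd s$ and $m_0>0$, $w_1+w_2+m_0=m(t)$; a short computation using $u_2<u_0<u_1$ shows this value lies strictly between $u_2$ and $u_1$, so a continuity/bootstrap argument forbids $x_1'$ from ever reaching the endpoints. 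This a priori bound together with $m(t)\ge m_0>0$ rules out finite-time blow-up and vanishing of the denominator, so the local solution of \eqref{e2.1} extends to all $t\ge0$; moreover $u_1\ge x_1'(t)\ge u_2$ is the over-compressing entropy condition \eqref{eq11}, so the configuration above is a legitimate entropy solution with the claimed constant flow on either side.

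I expect the real work to be in the structural step, namely making rigorous the claim that the solution is \emph{exactly} the two-constant-states-plus-moving-particle configuration rather than merely a heuristic. Doing this carefully means verifying that \eqref{eq9}--\eqref{eq10} satisfies Definition~\ref{def1} (equivalently, the generalized Rankine--Hugoniot conditions for the delta shock, which in the free-piston formulation reduce to \eqref{eq5}), checking that the constant-state characteristics genuinely fill $\Omega_1$ and $\Omega_2$ (which again relies on $x_1'(t)\in(u_2,u_1)$), and, if uniqueness is wanted, arguing within the class of solutions with the prescribed constant far states. Once the structure is fixed, the ODE derivation itself is essentially routine algebra, and the same computation repeated inside the measure-solution framework for the coupled initial--boundary value problems, and inside the generalized Rankine--Hugoniot conditions for the singular Riemann problem, supplies the other two proofs of \eqref{e2.1}.
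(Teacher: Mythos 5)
Your proposal is correct and follows the paper's first (sticky--particles) proof of the theorem in Section~\ref{sec31}: the same mass and momentum balance for the composite body, the same integrated momentum identity \eqref{2.7}, and the same differentiation in $t$ to obtain \eqref{e2.1}. The additional convex-combination argument showing $x_1'(t)\in(u_2,u_1)$ (hence $m(t)\ge m_0>0$ and global solvability) is a useful justification of the ansatz that the paper leaves implicit here and records only in the Appendix.
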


\begin{theorem}\label{t2.2}
  Under the assumptions of Theorem \ref{t2.1} but $u_0<u_1\leq u_2$, the trajectory function
  $x_{1}(t)$ solves
\begin{equation}\label{e2.2}
\left \{
\begin{split}
&x_{1}''(t)+\dfrac{\rho_{1}x_{1}'(t)-2\rho_{1}u_{1}}{\rho_{1}x_{1}(t) -\rho_{1}u_{1}t-m_{0}}x_{1}'(t)+\dfrac{\rho_{1}u_{1}^{2}}{\rho_{1}x_{1}(t)-\rho_{1}u_{1}t-m_{0}}=0,\\	&x_{1}(0)=0,~x_{1}'(0)=u_0.
\end{split} \right.
\end{equation}	
On the left side of the piston, the flow is of constant state $(\rho_1, u_1)^{\top}$ given by the initial data. On the right side of the piston, there is a vacuum $\{x: x_1(t)+l<x<u_2t+l\}$, and for $x>u_2t+l$, the flow is of constant initial state $(\rho_2, u_2)^\top$.
\end{theorem}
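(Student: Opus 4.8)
The plan is to derive \eqref{e2.2} along the same three lines indicated in the introduction, all of which collapse to one pair of scalar balance laws for the piston together with the gas that has accreted onto it; I will spell out the shortest (sticky-particle) route and then indicate how the other two reproduce it. The feature of Case~2 that makes everything go through is that, with $u_0<u_1$ and---as will be verified a posteriori---$x_1'(t)<u_1\le u_2$ for all $t\ge0$, the left gas keeps overtaking the piston and is swallowed by perfectly inelastic collisions, while the right gas recedes and leaves the vacuum $\{x_1(t)+l<x<u_2t+l\}$ in its wake, so it contributes nothing to the force: $w_p^2(t)\equiv0$.

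\emph{Sticky particles.} First I would identify the left particles that have struck the piston by time $t$. A particle that started at $x_0<0$ and is still free sits at $x_0+u_1t$; the function $\tau\mapsto x_0+u_1\tau-x_1(\tau)$ equals $x_0<0$ at $\tau=0$ and has derivative $u_1-x_1'(\tau)>0$, hence vanishes exactly once, so that particle has been absorbed by time $t$ iff $x_0\ge x_1(t)-u_1t$. Therefore the absorbed mass is $\rho_1\big(u_1t-x_1(t)\big)$, and the total mass of the composite ``piston $+$ stuck gas'' is
\[
M(t)=m_0+\rho_1\big(u_1t-x_1(t)\big)=-\big(\rho_1x_1(t)-\rho_1u_1t-m_0\big)>0 .
\]
By the contact condition \eqref{2.2} this composite moves with the single velocity $x_1'(t)$, so its momentum is $M(t)x_1'(t)$. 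The flow is pressureless and the right side is vacuum, so there is no pressure force and no force from the right; the momentum of the composite can change only through the momentum influx $\rho_1u_1\big(u_1-x_1'(t)\big)$ carried in (and conserved, by inelasticity) by newly absorbed particles. Hence
\[
\frac{\dd}{\dd t}\big(M(t)x_1'(t)\big)=\rho_1u_1\big(u_1-x_1'(t)\big),\qquad M'(t)=\rho_1\big(u_1-x_1'(t)\big).
\]
Eliminating $M'$ gives $M(t)x_1''(t)=\rho_1\big(u_1-x_1'(t)\big)^2$; expanding the square and dividing by $M(t)=-\big(\rho_1x_1(t)-\rho_1u_1t-m_0\big)$ is exactly \eqref{e2.2}, with the initial data $x_1(0)=0$, $x_1'(0)=u_0$ supplied by \eqref{1.3}.

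\emph{The two other routes.} For the coupled initial--boundary value problem I would solve the IBVP in $\Omega_1$ in the class of Radon measure solutions (following the analog of Definition~\ref{def1} for IBVPs): the ansatz is the constant state $(\rho_1,u_1)$ for $x<x_1(t)$ together with a Dirac mass $\beta(t)\delta_{x_1(t)}$ riding on the boundary with velocity $x_1'(t)$; the measure formulation of \eqref{1.1} on $\overline{\Omega_1}$ forces $\beta(t)=\rho_1\big(u_1t-x_1(t)\big)$ and identifies the boundary momentum term as the force $w_p^1(t)$ the flow exerts on the bare piston, while in $\Omega_2$ the solution is $(\rho_2,u_2)$ on $\{x>u_2t+l\}$, vacuum next to the piston, and $w_p^2(t)=0$. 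Inserting $w_p^1,w_p^2$ into Newton's law \eqref{eq5} and adding the momentum balance of the boundary Dirac again yields $\frac{\dd}{\dd t}\big(M(t)x_1'(t)\big)=\rho_1u_1\big(u_1-x_1'(t)\big)$ with $M=m_0+\beta$. For the singular Riemann problem I would substitute the delta-shock ansatz---the analog of \eqref{eq9}--\eqref{eq10} with $\tilde\rho_1=\rho_1$, $\tilde u_1=u_1$ on $\{x<x(t)\}$, a vacuum region on $\{x(t)<x<u_2t\}$, and $\tilde\rho_2=\rho_2$, $\tilde u_2=u_2$ on $\{x>u_2t\}$---into Definition~\ref{def1}; test functions supported across $x=x(t)$ produce the generalized Rankine--Hugoniot conditions $\dot\alpha(t)=\rho_1\big(u_1-x'(t)\big)$ and $\frac{\dd}{\dd t}\big(\alpha(t)x'(t)\big)=\rho_1u_1\big(u_1-x'(t)\big)$ with $\alpha(0)=m_0$, i.e.\ the very same ODE with $x\equiv x_1$ and $\alpha\equiv M$.

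\emph{Main obstacle and consistency checks.} The algebra is routine; the substantive points are (i) making the force $w_p^1(t)$ rigorous---this is where the precise definition of Radon measure solutions of IBVPs and a short argument that the flow in $\Omega_1$ must be the constant state plus a boundary-concentrated Dirac are needed---and (ii) closing the a posteriori assumptions by a bootstrap on \eqref{e2.2}. For (ii), \eqref{e2.2} is a second-order ODE with smooth right-hand side wherever $M\ne0$, so it has a maximal smooth solution on some $[0,T)$; with $v\doteq u_1-x_1'$ one has $v(0)=u_1-u_0>0$ and $v'=-\rho_1v^2/M$, so while $M>0$ the function $v$ stays strictly positive, whence $M'=\rho_1v>0$, $M\ge m_0>0$, and $x_1''=\rho_1v^2/M\le\rho_1(u_1-u_0)^2/m_0$ is bounded---hence $T=+\infty$ by a standard continuation argument. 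This simultaneously confirms $u_0\le x_1'(t)<u_1\le u_2$ for all $t$, so the left gas keeps overtaking (the inelastic-collision picture is valid), the right gas keeps receding (the vacuum and $w_p^2\equiv0$ persist), and the over-compressing entropy condition \eqref{eq11} holds in the weak sense, with $\tilde u_1=u_1\ge x_1'(t)$ and $\tilde u_2$ undefined.
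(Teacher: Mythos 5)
Your proposal is correct and follows essentially the same route as the paper: the paper obtains Theorem~\ref{t2.2} simply by setting $\tilde\rho_2=0$ in the sticky-particle computation \eqref{2.5}--\eqref{2.8} (and, for the measure-solution and singular-Riemann routes, by taking $\alpha_2\equiv0$, $w_p^2\equiv0$), which is precisely the content of your momentum balance $\frac{\dd}{\dd t}\big(M(t)x_1'(t)\big)=\rho_1u_1\big(u_1-x_1'(t)\big)$ with $M=m_0+\rho_1(u_1t-x_1)$. The one genuine addition you make is the short bootstrap at the end (setting $v=u_1-x_1'$ and showing $v>0$, $M\ge m_0$, $x_1''$ bounded) to close the a posteriori assumption $x_1'(t)<u_1\le u_2$ and thus the persistence of the right vacuum; the paper leaves that to the explicit formula \eqref{3.36} in the Appendix, so your self-contained argument is a reasonable tightening rather than a departure.
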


\begin{remark}\label{rem1}
From these two theorems we see that the trajectory of the piston relies on its mass $m_0$ rather than its thickness $l$. Therefore mathematically regarding the piston as a moving mass-point  in the fluid is reasonable.
\end{remark}
\begin{remark}\label{rem2new}
\eqref{e2.2} could be considered as a special case of \eqref{e2.1} for $\rho_2=0$, since for Theorem \ref{t2.2}, the state near the right side of the piston is vacuum as indicated  above.
\end{remark}

\begin{theorem}\label{thm2.3}
For initial data satisfying Cases 1 and 2 respectively,  the singular Riemann problem  \eqref{1.1}\eqref{eq6} has delta shock solutions with delta shock $x=x_1(t)$ given respectively by  \eqref{e2.1} and \eqref{e2.2}, and for Case 1, $\alpha(t)=(\tilde{\rho}_2-\tilde{\rho}_1)x_{1}(t)-(\tilde{\rho}_2\tilde{u}_2-\tilde{\rho}_1\tilde{u}_1)t +m_{0},$ $\tilde{\rho}_{1,2}=\rho_{1,2}, \tilde{u}_{1,2}=u_{1,2}$; for Case 2, $\alpha(t)=-{\rho}_1x_{1}(t)+{\rho}_1{u_1}t +m_{0},$ $\tilde{\rho}_{1}=\rho_{1}, \tilde{u}_{1}=u_{1},$ and $\tilde{\rho}_2=\begin{cases} 0, &\mbox{if~}x_1(t)<x<u_2t,\\
\rho_2, & \mbox{if~}x\ge u_2t,
\end{cases}$ while $\tilde{u}_2$ is not defined for $x_1(t)<x<u_2t$, and equals to $u_2$ for $x\ge u_2t$.
\end{theorem}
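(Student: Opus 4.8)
The plan is to substitute the delta-shock ansatz \eqref{eq9}--\eqref{eq10} into the integral identities \eqref{2.43} of Definition \ref{def1}, extract the \emph{generalized Rankine--Hugoniot conditions} governing the shock curve $x=x_1(t)$ and the weight $\alpha(t)$, and then verify by elementary algebra that these conditions are equivalent to the initial value problems \eqref{e2.1} and \eqref{e2.2}, with the stated $\tilde\rho_{1,2}$, $\tilde u_{1,2}$ and $\alpha(t)$.

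First I would record the induced momentum and energy-flux measures: writing $c(t):=x_1(t)$, the ansatz forces $m(t)=\tilde\rho_1\tilde u_1\mathcal L^1\lfloor\{x<c(t)\}+\tilde\rho_2\tilde u_2\mathcal L^1\lfloor\{x>c(t)\}+\alpha(t)c'(t)\delta_{c(t)}$ and $n(t)=\tilde\rho_1\tilde u_1^2\mathcal L^1\lfloor\{x<c(t)\}+\tilde\rho_2\tilde u_2^2\mathcal L^1\lfloor\{x>c(t)\}+\alpha(t)c'(t)^2\delta_{c(t)}$, so that the Radon--Nikodym relations \eqref{2.44} hold automatically; continuity in $t$ in the weak* topology follows from the Lipschitz regularity of $c$ and the $C^1$ regularity of $\alpha$. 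For Case 2 the region to the right of the shock is split by the vacuum--gas interface $x=u_2t$, with $\tilde\rho_2=0$ on $\{c(t)<x<u_2t\}$ and $\tilde\rho_2=\rho_2$, $\tilde u_2=u_2$ on $\{x>u_2t\}$; since on the gas side the fluid velocity equals the interface speed $u_2$, that interface satisfies the classical jump relations $s[\rho]=[\rho u]$ and $s[\rho u]=[\rho u^2]$ trivially and contributes nothing to \eqref{2.43}.

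Next, for a test function $\phi\in C^1_c(\mathbb R^2)$ I would compute the three terms in each identity of \eqref{2.43} by splitting $\mathbb R\times(0,+\infty)$ along the curve $x=c(t)$ (and, in Case 2, also along $x=u_2t$) and applying Green's theorem in the resulting subregions, handling the Dirac parts through the product rule $\frac{\dd}{\dd t}\bigl[\alpha(t)\phi(c(t),t)\bigr]=\alpha'(t)\phi(c(t),t)+\alpha(t)\bigl(\phi_t+c'(t)\phi_x\bigr)(c(t),t)$. The absolutely continuous pieces yield boundary integrals on $\{t=0\}$ and on the curve; the $\{t=0\}$ contributions cancel against $\langle\varrho(0),\phi(\cdot,0)\rangle$ and $\langle m(0),\phi(\cdot,0)\rangle$ exactly when $\tilde\rho_{1,2}=\rho_{1,2}$ (resp. $\tilde\rho_1=\rho_1$, $\tilde\rho_2=0$ near the shock in Case 2) and when the atomic parts match, which forces $\alpha(0)=m_0$ and $\alpha(0)c'(0)=m_0u_0$, i.e. $c'(0)=u_0$. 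The coefficients of $\phi_x(c(t),t)$ cancel identically, and equating the coefficient of $\phi(c(t),t)$ to zero (for all $\phi$) gives the generalized Rankine--Hugoniot system
\begin{align*}
\alpha'(t)&=(\tilde\rho_2-\tilde\rho_1)c'(t)-(\tilde\rho_2\tilde u_2-\tilde\rho_1\tilde u_1),\\
\bigl(\alpha(t)c'(t)\bigr)'&=(\tilde\rho_2\tilde u_2-\tilde\rho_1\tilde u_1)c'(t)-(\tilde\rho_2\tilde u_2^2-\tilde\rho_1\tilde u_1^2),
\end{align*}
together with $c(0)=0$, $\alpha(0)=m_0$, $c'(0)=u_0$.

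Finally I would integrate the first relation to obtain $\alpha(t)=m_0+(\tilde\rho_2-\tilde\rho_1)c(t)-(\tilde\rho_2\tilde u_2-\tilde\rho_1\tilde u_1)t$, which is precisely the claimed formula for $\alpha$ and whose negative is the denominator in \eqref{e2.1}/\eqref{e2.2}; then substitute $\alpha$ and $\alpha'$ into the second relation and simplify. In Case 1 this gives $\alpha(t)c''(t)=(\rho_1-\rho_2)c'(t)^2-2(\rho_1u_1-\rho_2u_2)c'(t)+(\rho_1u_1^2-\rho_2u_2^2)$, which becomes \eqref{e2.1} upon dividing by $-\alpha(t)$; in Case 2 it collapses to $\alpha(t)c''(t)=\rho_1\bigl(c'(t)-u_1\bigr)^2$, which is \eqref{e2.2} (consistent with Remark \ref{rem2new}, the $\rho_2=0$ specialization). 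The division by $\alpha(t)$ is legitimate because $\alpha(t)=m_0+{}$(mass swept up) stays positive and monotone; and the over-compressing condition \eqref{eq11} follows since $c'(t)$ stays in $(u_2,u_1)$ (resp. $(u_0,u_1)$), as read off from the ODE. Existence of the delta-shock solution then reduces to solvability of these IVPs, which is exhibited explicitly in the Appendix. The main obstacle is the bookkeeping in the Green's-theorem splitting and the careful matching of the atomic part $m_0\delta_0$ of the initial data (and, in Case 2, checking that the auxiliary interface $x=u_2t$ genuinely drops out); once the Rankine--Hugoniot system is isolated, the remaining computation is routine.
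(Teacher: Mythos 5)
Your proposal is correct and follows essentially the same path as the paper: substitute the delta-shock ansatz \eqref{2.45} into the weak formulation \eqref{2.43} of Definition \ref{def1}, read off the generalized Rankine--Hugoniot conditions \eqref{2.46} (your first relation, your second relation written as $(\alpha c')'=\dots$, and the Radon--Nikodym constraint $w_m=\alpha c'$ built into your ansatz), integrate to get $\alpha(t)$, and eliminate $\alpha$ to recover \eqref{e2.1} and \eqref{e2.2}. The paper integrates both the mass and momentum jump relations to obtain the algebraic balance \eqref{2.48} (identical to \eqref{2.7}) and then differentiates, whereas you substitute $\alpha$ and $\alpha'$ directly into the differentiated momentum relation; these are algebraically equivalent routes to the same ODE. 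Your extra remarks on Green's-theorem bookkeeping, the matching of the atom $m_0\delta_0$ at $t=0$, and the verification that the contact line $x=u_2t$ contributes nothing in Case 2 are details the paper leaves implicit but are consistent with it.
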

\begin{remark}
The states on the left/right hand side of the piston (delta shock) can be easily verified due to the simplicity of pressureless Euler flows as there is no acoustic wave in the fluids. So combining the observations in Remark \ref{rem1}, we see Theorem \ref{thm2.3} means that the free piston and the delta shock are actually the same thing, after viewing the interval $[x_1(t),x_1(t)+l]$ as a single point in a quotient topology of $\mathbb{R}$ suggested before. Therefore the  solution of the free piston problem could be easily obtained from  that of the singular Riemann problem, which may be found in \cite[Section 3.4.2, pp. 67-72]{lizhangyang}. See also the Appendix for explicit solutions of problems \eqref{e2.1} and \eqref{e2.2}.
\end{remark}

\section{Trajectory of free piston and delta shock}\label{sec3}
This section is devoted to proving Theorems \ref{t2.1}-\ref{thm2.3}, by deriving the ODE satisfied by the trajectory of the piston and delta shock. For the piston problem, we will use two methods. One is a direct application of conservation of mass and momentum to a control volume of sticking particles and piston, which is essentially from the microscopic point of view and physically favorable; the other is to utilize the concept of Radon measure solutions of initial-boundary value problems of pressureless Euler equations, which is a macroscopic approach. For the singular Riemann problem, the delta shock is derived straightforwardly from Definition \ref{def1}.

\subsection{Motion of piston driven by sticky particles}\label{sec31}

Set $(\tilde{\rho}_1, \tilde{u}_1)^{\top}$ {and} $(\tilde{\rho}_2,\tilde{u}_2)^\top$ be the constant state {on the} left {and} right side of the piston respectively.
For any given $t>0$,
assume that during the period $[0,t]$, the gas in the intervals $(a(t),0)$~and~$(l,b(t))$~have sticken respectively to the left and right surface of the piston.
Then
\begin{equation}\label{2.3}
|a(t)|  = \tilde{u}_1t-x_{1}(t),\quad
b(t)=x_{2}(t)-\tilde{u}_2 t.	
\end{equation}
Let $m_{1}(t)$ and $m_{2}(t)$ be respectively the mass concentrated on the left
and right surface of the piston from  time $0$ to $t$.  Then by \eqref{2.3}, and {recalling}   $x_{2}(t)=x_{1}(t)+l$, we have
\begin{equation}\label{2.5}
\left \{
\begin{split}
&m_{1}(t)=\tilde{\rho}_1|a(t)|=\tilde{\rho}_1\tilde{u}_1t-\tilde{\rho}_1x_{1}(t),\\
&m_{2}(t)=\tilde{\rho}_2(b(t)-l)=-\tilde{\rho}_2 \tilde{u}_2t+\tilde{\rho}_2x_{1}(t).
\end{split} \right.
\end{equation}
By conservation of momentum,
\begin{equation}\label{2.6}
\begin{split}
(m_{1}(t)+m_{0}+m_{2}(t))x_{1}'(t)=m_{1}(t)\tilde{u}_1+m_{0}\cdot u_0+m_{2}(t)\tilde{u}_2,
\end{split}
\end{equation}
which, together with \eqref{2.5}, implies that
\begin{equation}\label{2.7}
\begin{split}
&x_{1}'(t)\big(\tilde{\rho}_1\tilde{u}_1t-\tilde{\rho}_1x_{1}(t)+m_{0} -\tilde{\rho}_2\tilde{u}_2t+\tilde{\rho}_2x_{1}(t)\big)\\
&\qquad =\tilde{\rho}_1\tilde{u}_1^{2}t-\tilde{\rho}_1\tilde{u}_1x_{1}(t) -\tilde{\rho}_2\tilde{u}_2^{2}t+\tilde{\rho}_2\tilde{u}_2x_{1}(t)+m_0u_0.
\end{split}
\end{equation}
Taking a derivative with respect to $t$ on both sides, we have
\begin{equation}\label{2.8}
\begin{split}
x_{1}''(t)&+\dfrac{(\tilde{\rho}_1-\tilde{\rho}_2)x_{1}'(t)-2(\tilde{\rho}_1\tilde{u}_1 -\tilde{\rho}_2\tilde{u}_2)}{(\tilde{\rho}_1-\tilde{\rho}_2)x_{1}(t)-(\tilde{\rho}_1\tilde{u}_1 -\tilde{\rho}_2\tilde{u}_2)t-m_{0}}x_{1}'(t)\\
&\qquad +\dfrac{\tilde{\rho}_1\tilde{u}_1^{2}-\tilde{\rho}_2\tilde{u}_2^{2}}{(\tilde{\rho}_1 -\tilde{\rho}_2)x_{1}(t)-(\tilde{\rho}_1\tilde{u}_1-\tilde{\rho}_2\tilde{u}_2)t-m_{0}}=0.	\end{split}
\end{equation}	
For $m_0>0$, \eqref{2.7} and $x_1(0)=0$ show that $x_1'(0)=u_0$. We thus derived problem \eqref{e2.1}. For \eqref{e2.2}, just taking $\tilde{\rho}_2=0$ in the above calculations.

\subsection{Trajectory of piston by Radon measure solutions}\label{s2.2}
To solve the free piston as a problem of partial differential equations, we need a proper definition of Radon measure solutions, similar to that of Definition \ref{def1}, with the boundary condition \eqref{2.2} taken into account.

\begin{definition}
Let $x=x_1(t)$ be a Lipschitz function, and $\Omega_1$ defined by \eqref{eq6add}.  For the piston problem \eqref{1.1}\eqref{1.2}\eqref{2.2} in $\Omega_1$, assume that $\varrho$ %$\varrho, m, n,$ are
is a Radon measure on ${\bar\Omega_1}$, $u$ is a $\varrho$-measurable function and $w_p(t)\in L^{1}_{\mathrm{loc}}([0,+\infty))$ {is nonnegative}.
We call ${(\varrho,u)^\top}$  a (Radon) measure solution to this problem, provided  that there exist Radon measures $m$ and $n$ satisfying the following: 
\par i) for any $\phi(x,t)\in C_{c}^{1}(\Bbb R^2),$  there hold
\begin{align}\label{2.19}
&\int_{\Omega_1}\phi_t(x,t)\,\dd \varrho(x,t)+\int_{\Omega_1} \phi_x(x,t)\,\dd m(x,t)+ \int_{-\infty}^0\phi(x,0)\,\dd\varrho(x,0) =0,\\
&\int_{\Omega_1}\phi_t(x,t)\,\dd m(x,t)+\int_{\Omega_1}\phi_x(x,t)\,\dd n(x,t)-\int_{0}^{+\infty} w_p^1(t)\phi(x_1(t),t)\,\dd t\nonumber
\\&\qquad\qquad\qquad\qquad\qquad\qquad\qquad\qquad\qquad\quad+\int_{-\infty}^{0} \phi(x,0)\,\dd m(x,0)=0;\label{220}
\end{align}
\par ii) $\varrho$ is nonnegative, while $m$ and  $n$ are absolutltly continuous with respect to $\varrho$. The Radon-Nikodym derivatives satisfy
\begin{equation}\label{2.22}
\begin{split}
&u(x,t)\doteq\dfrac{\dd m(x,t)}{\dd \varrho(x,t)}, \qquad u(x,t)^2=\dfrac{\dd n(x,t)}{\dd\varrho(x,t)}.
\end{split}
\end{equation}
\end{definition}
\qed

We remark that the function $w^1_p(t)$ represents the pressure (hence force) on the left side of the piston at time $t$, exerted due to the inertia of pressureless particles. This is the key point that differs essentially from the Cauchy problem of the pressureless gas dynamics. Similarly we could formulate the definition of piston problem in $\Omega_2$ defined by \eqref{eq7add}, just replacing $x_1(t)$ by $x_2(t)$, $w_p^1$ by $w_p^2$, and the minus sign in front of $w_p^1$ by plus sign, as well as integrating the initial data on  $(l,+\infty)$ instead of $(-\infty,0)$. The piston moves according to Newton's second law, given by \eqref{eq5}.
%\begin{equation}\label{2.23}
%\begin{split}
%x_1''(t)=\dfrac{w^1_{p}(t)-w^2_p(t)}{m_{0}},
%\end{split}
%\end{equation}

Let $\mathcal{L}^{2}$ be the Lebesgue measure on $\Bbb R^{2}$. Suppose that
\begin{equation}\label{2.24}
\begin{split}
&\varrho=\tilde{\rho}_1\mathcal{L}^{2}\lfloor\Omega_{1} +\alpha_{1}(t)\delta_{\Gamma_{1}},\\
&m=\tilde{\rho}_1\tilde{u}_1\mathcal{L}^{2}\lfloor\Omega_{1} +w_{m}^{1}(t)\delta_{\Gamma_{1}},\\
&n=\tilde{\rho}_1\tilde{u}_1^{2}\mathcal{L}^{2}\lfloor\Omega_{1}  +w_{n}^{1}(t)\delta_{\Gamma_{1}},
\end{split}
\end{equation}
%$$\wp=0,$$
where $\alpha_{1}(t),~w_{m}^{1}(t),~w_{n}^{1}(t)$ are functions to be determined. Notice that the weighted Dirac measure $\eta\doteq\alpha_{1}(t)\delta_{\Gamma_{1}}$ supported on the curve $\Gamma_1=\{(x_1(t), t): t\ge0\}$ is defined by
$$\int_{\Omega_1}\phi(x,t)\,\dd\eta(x,t)=\langle\alpha_{1}(t)\delta_{\Gamma_{1}}, \phi\rangle=\int_{0}^{\infty}\alpha_1(t)\phi(x_1(t),t)\,\dd t.$$
Now substituting \eqref{2.24} into %the first equation of
\eqref{2.19}, it follows
\begin{equation}\label{2.25}
\begin{split}
\int_{\Omega_{1}}{\tilde{\rho}_1\partial_{t}\phi}\,\dd x\dd t& +\int_{0}^{+\infty}\alpha_{1}(t)\partial_{t}\phi(x_1(t),t)\, \dd t +\int_{\Omega_{1}}{\tilde{\rho}_1\tilde{u}_1\partial_{x}\phi}\,\dd x\dd t \\
& +\int_{0}^{+\infty}w_{m}^{1}(t)\partial_{x_{1}}\phi(x_1(t),t)\, \dd t+\int_{-\infty}^{0}{\rho(0,x)\phi(0,x)}\,\dd x=0.
\end{split}
\end{equation}
Observing that
\begin{equation}\label{2.26}
\begin{split}
&\int_{0}^{+\infty}\alpha_{1}(t)\partial_{t}\phi (x_1(t),t)\, \dd t =-\alpha_{1}(0)\phi(0,0)\\&\quad-\int_{0}^{+\infty}\alpha_{1}(t)\partial_{x_{1}}\phi(x_1(t),t) x_{1}'(t)\,\dd t-\int_{0}^{+\infty}\phi(x_1(t),t) \dfrac{\dd\alpha_{1}(t)}{\dd t}\,\dd t,
\end{split}
\end{equation}
while integration-by-parts yields
\begin{equation}\label{2.27}
\begin{split}
&\int_{\Omega_{1}}{\tilde{\rho}_1\partial_{t}\phi}\,\dd x\dd t +\int_{\Omega_{1}}{\tilde{\rho}_1\tilde{u}_1\partial_{x}\phi}\,\dd x\dd t +\int_{-\infty}^{0}{\rho(0,x)\phi(0,x)}\,\dd x\\
&\qquad +\int_{0}^{+\infty}w_{m}^{1}(t) \partial_{x_{1}}\phi(x_1(t),t) \,\dd t
=-\int_{0}^{+\infty}\tilde{\rho}_1x_{1}'(t)\phi(x_1(t),t)\, \dd t\\&\qquad\quad+\int_{0}^{+\infty}\tilde{\rho}_1\tilde{u}_1\phi(x_1(t),t)\, \dd t+\int_{0}^{+\infty}w_{m}^{1}(t)\partial_{x_{1}}\phi(x_1(t),t)\, \dd t,
\end{split}
\end{equation}
thus \eqref{2.25} implies that
 \begin{align}\label{2.28}
&-\alpha_{1}(0)\phi(0,0)-\int_{0}^{+\infty}\alpha_{1}(t)\partial_{x_{1}}\phi(x_1(t),t) x_{1}'(t)\,\dd t-\int_{0}^{+\infty}\phi(x_1(t),t) \dfrac{\dd\alpha_{1}(t)}{\dd t}\,\dd t\nonumber\\
=&\int_{0}^{+\infty}\tilde{\rho}_1x_{1}'(t)\phi(x_1(t),t) \,\dd t-\int_{0}^{+\infty}\tilde{\rho}_1\tilde{u}_1\phi (x_1(t),t) \,\dd t\\
&\qquad\qquad-\int_{0}^{+\infty}w_{m}^{1}(t)\partial_{x_{1}}\phi(x_1(t),t) \,\dd t.\nonumber
 \end{align}
{By the} arbitrariness of $\phi$, it follows
 \begin{equation}\label{2.29}
 \left \{
 \begin{split}
 &\dfrac{\dd\alpha_{1}(t)}{\dd t}=-\tilde{\rho}_1x_{1}'(t)+\tilde{\rho}_1\tilde{u}_1,\qquad \alpha_{1}(0)=0,\\
 &w_{m}^{1}(t)=\alpha_{1}(t)\cdot x_{1}'(t).
 \end{split}\right.
 \end{equation}
Similarly, by substituting \eqref{2.24} into \eqref{220}, we  get %after similar computations,  one gets
\begin{equation}\label{2.24new}
 \left \{
 \begin{split}
 &w_{m}^{1}(t)=0,\\
 &\dfrac{\dd w_{m}^{1}(t)}{\dd t}=-w_{p}^{1}(t)-\tilde{\rho}_1\tilde{u}_1x_{1}'(t) +\tilde{\rho}_1\tilde{u}_1^{2},\\
 &w_{n}^{1}(t)=w_{m}^{1}(t)\cdot x_{1}'(t).
 \end{split}\right.
 \end{equation}

For the piston problem in $\Omega_2$, supposing that
\begin{equation*}
\begin{split}
&\varrho=\tilde{\rho}_2\mathcal{L}^{2}\lfloor\Omega_{2} +\alpha_{2}(t)\delta_{\Gamma_{2}},\\
&m=\tilde{\rho}_2\tilde{u}_2\mathcal{L}^{2}\lfloor{\Omega_{2}}+w_{m}^{2}(t)\delta_{\Gamma_{2}},\\
&n=\tilde{\rho}_2\tilde{u}_2^{2}\mathcal{L}^{2}\lfloor{\Omega_{2}}+ w_{n}^{2}(t)\delta_{\Gamma_{2}},
\end{split}
\end{equation*}
where $\Gamma_2=\{(x_2(t), t): t\ge0\}$, similar calculations show that
 \begin{equation}\label{2.30}
 \left \{
 \begin{split}
 &\alpha_{2}(0)=0,\\
 &{\frac{\dd\alpha_2(t)}{\dd t} =\tilde{\rho}_2x_{2}'(t)-\tilde{\rho}_2\tilde{u}_2,}\\
 &w_{m}^{2}(t)=\alpha_{2}(t)\cdot x_{2}'(t),\\
 &\dfrac{\dd w_{m}^{2}(t)}{\dd t}=w_{p}^{2}(t)+\tilde{\rho}_2\tilde{u}_2x_{2}'(t)-\tilde{\rho}_2\tilde{u}_2^{2},\\
 &w_{n}^{2}(t)=w_{m}^{2}(t)\cdot x_{2}'(t).
  \end{split}\right.
 \end{equation}
%A directive integration gives
{Combining \eqref{2.29}-\eqref{2.30} gives}
\begin{equation}\label{2.31}
\left \{
\begin{split}
&\alpha_{1}(t)=-\tilde{\rho}_1x_{1}(t)+\tilde{\rho}_1\tilde{u}_1t,\\
&w_{m}^{1}(t)=[-\tilde{\rho}_1x_{1}(t)+\tilde{\rho}_1\tilde{u}_1t]\cdot x_{1}'(t),\\
&w_{p}^{1}(t)=x_{1}''(t)[\tilde{\rho}_1x_{1}(t)-\tilde{\rho}_1\tilde{u}_1t]
+x_{1}'(t)[-2\tilde{\rho}_1\tilde{u}_1+\tilde{\rho}_1x_{1}'(t)]+\tilde{\rho}_1\tilde{u}_1^{2},\\
&\alpha_{2}(t)=\tilde{\rho}_2x_{2}(t)-\tilde{\rho}_2\tilde{u}_2t-\tilde{\rho}_2l,\\
&w_{m}^{2}(t)=[\tilde{\rho}_2x_{2}(t)-\tilde{\rho}_2\tilde{u}_2t]\cdot x_{2}'(t),\\
&w_{p}^{2}(t)=x_{2}''(t)[\tilde{\rho}_2x_{2}(t)-\tilde{\rho}_2\tilde{u}_2t-\tilde{\rho}_2l]+x_{2}'(t)[-2\tilde{\rho}_2\tilde{u}_2
+\tilde{\rho}_2x_{2}'(t)]+\tilde{\rho}_2\tilde{u}_2^{2}.
\end{split}\right.
\end{equation}

Recalling that $x_{2}(t)=x_{1}(t)+l, \forall \,t\ge0$, then thanks to \eqref{2.31}, the Newton's second law \eqref{eq5}, i.e.,
 \begin{equation*}%\label{2.33}
  x_{1}''(t)=x_{2}''(t)=\dfrac{w_{p}^{1}(t)-w_{p}^{2}(t)}{m_{0}},
 \end{equation*}
becomes
\begin{equation}\label{2.34}
 \begin{split}
  x_{1}''(t)=&\dfrac{x_{1}''(t)[(\tilde{\rho}_1-\tilde{\rho}_2)x_{1}(t)-(\tilde{\rho}_1\tilde{u}_1-\tilde{\rho}_2\tilde{u}_2)t]}{m_{0}}\\
  &+\dfrac{x_{1}'(t)[-2(\tilde{\rho}_1\tilde{u}_1\tilde{\rho}_2\tilde{u}_2)+x_{1}'(t)(\tilde{\rho}_1\tilde{\rho}_2)]}{m_{0}}+\dfrac{\tilde{\rho}_1\tilde{u}_1^{2}\tilde{\rho}_2\tilde{u}_2^{2}}{m_{0}},
 \end{split}
 \end{equation}
which in turn could be simplified to the ODE in \eqref{e2.1}. 	
This completes the proof of Theorem \ref{t2.1}.

The proof of Theorem \ref{t2.2} follows the same procedure shown above, just taking
$$\tilde{\rho}_2=\begin{cases} 0, &\text{if}~x_2(t)<x<u_2t+l,\\
\rho_2, &\text{if}~ x\ge u_2t+l,
\end{cases}$$
and $\alpha_2=0$, $w_p^2=0$.

\begin{remark}
Notice that by some computations (cf. the expressions of $x_1(t)$ in the Appendix), one can show that the over-compressibility entropy condition guarantees that $w_p^1$ and $w_p^2$ are nonnegative for the pressureless Euler flow. %This kind of solution is usually called an entropy solution.
\end{remark}

\subsection{Delta shock of singular Riemann problem}\label{s2.3}
The proof of Theorem \ref{thm2.3} is still based upon direct calculations according to Definition \ref{def1}, by supposing that
\begin{equation}\label{2.45}
\begin{split}
&\varrho(t)=\tilde{\rho}_1\mathcal{L}^{1}\lfloor\{x: x<x_1(t)\}+\tilde{\rho}_2\mathcal{L}^{1}\lfloor\{x: x>x_1(t)\} +\alpha(t)\delta_{x_1(t)},\\
&m(t)=\tilde{\rho}_1\tilde{u}_1\mathcal{L}^{1}\lfloor\{x: x<x_1(t)\} +\tilde{\rho}_2\tilde{u}_2\mathcal{L}^{1}\lfloor\{x: x>x_1(t)\}+w_{m}(t)\delta_{x_1(t)},\\
&n(t)=\tilde{\rho}_1\tilde{u}_1^{2}\mathcal{L}^{1}\lfloor\{x: x<x_1(t)\} +\tilde{\rho}_2\tilde{u}_2^{2}\mathcal{L}^{1}\lfloor\{x: x>x_1(t)\}+w_{n}(t)\delta_{x_1(t)},
\end{split}
\end{equation}
where $\alpha(t),\, w_{m}(t)$ and $w_{n}(t)$ are functions to be determined, with initial data $\alpha(0)=m_0$, $w_m(0)=m_0u_0$ provided by the singular Riemann initial data \eqref{eq6}. 
Substituting \eqref{2.45} into \eqref{2.43}, {by the} arbitrariness of $\phi$,~we have the following generalized Rankine-Hugoniot conditions \cite[(3.3.10) in p. 60]{lizhangyang}
 \begin{equation}\label{2.46}
 \left \{
 \begin{split}
 &\dfrac{\dd\alpha(t)}{\dd t}
 =-\tilde{\rho}_1x_{1}'(t)+\tilde{\rho}_1\tilde{u}_1+\tilde{\rho}_2x_{1}'(t) -\tilde{\rho}_2\tilde{u}_2,\\
 &\dfrac{\dd w_{m}(t)
 }{\dd t}=-\tilde{\rho}_1\tilde{u}_1x_{1}'(t) +\tilde{\rho}_1\tilde{u}_1^{2}+\tilde{\rho}_2\tilde{u}_2x_{1}'(t) -\tilde{\rho}_2\tilde{u}_2^{2},\\
 &w_{m}(t)=\alpha(t)x_{1}'(t).
 \end{split}\right.
 \end{equation}
Integration of the first two equations {yields}
\begin{equation}\label{2.47}
\left \{
\begin{split}
&\alpha(t)=-\tilde{\rho}_1x_{1}(t)+\tilde{\rho}_1\tilde{u}_1t +\tilde{\rho}_2x_{1}(t)-\tilde{\rho}_2\tilde{u}_2t+m_{0},\\
&w_{m}(t)%+w_{m}^{2}(t)
=-\tilde{\rho}_1\tilde{u}_1x_{1}(t)+\tilde{\rho}_1\tilde{u}_1^{2}t +\tilde{\rho}_2\tilde{u}_2x_{1}(t)-\rho_{2}\tilde{u}_2^{2}t+m_0u_0.
\end{split}\right.
\end{equation}
Then  by the last equation in \eqref{2.46}, we have
\begin{equation}\label{2.48}
\begin{split}
[-\tilde{\rho}_1x_{1}(t)+\tilde{\rho}_1\tilde{u}_1t&+\tilde{\rho}_2x_{1}(t) -\tilde{\rho}_2\tilde{u}_2t+m_{0}]x_{1}'(t)=\\
&-\tilde{\rho}_1\tilde{u}_1x_{1}(t)+\tilde{\rho}_1\tilde{u}_1^{2}t +\tilde{\rho}_2\tilde{u}_2x_{1}(t)-\tilde{\rho}_2\tilde{u}_2^{2}t+m_0u_0,
\end{split}
\end{equation}	
which is exactly \eqref{2.7}.  Then the claims of Theorem \ref{thm2.3} follow easily.

\begin{remark}\label{rem3}
The limiting case $m_0=0$ is interesting, for which $u_0=x_1'(0)$ could not be prescribed arbitrarily, and \eqref{eq5} does not make sense. However, \eqref{2.7} derived from the approach of delta shock still works. By L'Hospital rule, from \eqref{2.6} we may solve that
$u_0=\frac{\tilde{u}_1+\tilde{u}_2}{2}$ for $\tilde{\rho}_1=\tilde{\rho}_2$, and for $\tilde{\rho}_1\ne\tilde{\rho}_2$, there are two solutions:
$$u_0=\frac{\big(\tilde{\rho}_2\tilde{u}_2-\tilde{\rho}_1\tilde{u}_1\big)\pm\sqrt{\tilde{\rho}_1\tilde{\rho}_2}|\tilde{u}_1-\tilde{u}_2|}{\tilde{\rho}_2-\tilde{\rho}_1}
=\frac{\sqrt{\tilde{\rho}_2}\tilde{u}_2\pm \sqrt{\tilde{\rho}_1}\tilde{u}_1}{\sqrt{\tilde{\rho}_2}\pm\sqrt{\tilde{\rho}_1}}.$$
To be consistent with the limiting case $\tilde{\rho}_1=\tilde{\rho}_2$, in the above we shall take the `$+$' sign. It is seen that the additional requirement of prescribing  $x'(0)$ is due to the derivative we took on \eqref{2.7}. We also note that Theorems \ref{t3.1} and \ref{t3.2} below on the solutions of free piston problem are also valid for the case $m_0=0$. This is a great benefit of considering the free piston as a delta shock.
\end{remark}

\begin{remark}
The equivalence of free piston  and  delta shock also holds for compressible Euler equations with general pressure law, such as polytropic gas, which is discussed in \cite{jinquyuan2021riemann}.
\end{remark}

\appendix
\section{Solvability of piston's trajectories}\label{s3}

For convenience of readers, in this Appendix we list the results on the six cases of the free piston problem. We omit the proof, since the details are the same as those in \cite[pp. 67-72]{lizhangyang}.

\subsection{Case 1: $u_2<u_0<u_{1}$}
\begin{theorem}\label{t3.1}
Suppose that $u_2<u_0<u_{1}$, then the free piston problem \eqref{1.1}-\eqref{eq5} admits a global solution, in which the trajectory of the piston satisfies \eqref{e2.1}. The velocity of the piston $x'_ {1}(t)\in(u_2,u_1)$ for any $t>0$, and it converges to a constant as $t\to+\infty$.  There are concentration on both sides of the piston for all the time. Particularly, we have \\%$x_ {1}(t)$ is given by \\
1) if ~$\rho_{1}\neq\rho_{2}$,
\begin{align}\label{3.1}
 x_{1}(t)=&\dfrac{-\sqrt{\rho_{1}\rho_{2}(u_{1}-u_{2})^{2}t^{2}+2m_{0}[\rho_{1}(u_{1}-u_{0})-\rho_{2}(u_{2}-u_{0})]t+m_{0}^{2}}}{\rho_{1}-\rho_{2}}\\
 &+\dfrac{(\rho_{1}u_{1}-\rho_{2}u_{2})t+m_{0}}{\rho_{1}-\rho_{2}},\\
\label{3.18a}
\lim_{t\rightarrow+\infty} x_{1}'(t)=&\dfrac{-\sqrt{\rho_{1}\rho_{2}}(u_{1}-u_{2})+\rho_{1}u_{1} -\rho_{2}u_{2}}{\rho_{1}-\rho_{2}}=\frac{\sqrt{\rho}_1u_1+\sqrt{\rho_2}u_2}{\sqrt{\rho}_1+\sqrt{\rho_2}};
 \end{align}
2) if $\rho_{1}=\rho_{2}$,
\begin{align}\label{3.2}
x_{1}(t)=&\frac{u_1+u_2}{2}t+\frac{m_0^2(u_1+u_2 -2u_0)}{2\rho_1(u_1-u_2)[\rho_{1}(u_{1}-u_{2})t+m_{0}]} -\frac{m_0(u_1+u_2-2u_0)}{2\rho_1(u_1-u_2)},%\dfrac{\rho_{1}(u_{1}^{2}-u_{2}^{2})t^{2}}{2[\rho_{1}(u_{1}-u_{2})t+m_{0}]},
\\
\label{3.33}
\lim_{t\rightarrow+\infty} x_{1}'(t)=&\dfrac{u_{1}+u_{2}}{2}.
 \end{align}
\end{theorem}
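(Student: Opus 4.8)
\textbf{Proof strategy for Theorem~\ref{t3.1}.}
By Theorem~\ref{thm2.3} and the computations in Section~\ref{sec3}, the piston's trajectory coincides with the delta shock and is governed by the initial value problem \eqref{e2.1}; moreover \eqref{e2.1} together with $x_1(0)=0$, $x_1'(0)=u_0$ is equivalent (integrate once and check the integration constant) to the first-order relation \eqref{2.7}. It is convenient to phrase everything in terms of the total mass carried by the piston, $N(t):=m_0+(\rho_1u_1-\rho_2u_2)t-(\rho_1-\rho_2)x_1(t)$, which by \eqref{2.5} equals $m_1(t)+m_0+m_2(t)$ with $m_1(t)=\rho_1\big(u_1t-x_1(t)\big)$, $m_2(t)=\rho_2\big(x_1(t)-u_2t\big)$; relation \eqref{2.7} then reads $N(t)x_1'(t)=(\rho_1u_1^2-\rho_2u_2^2)t-(\rho_1u_1-\rho_2u_2)x_1(t)+m_0u_0$, and note $N(0)=m_0>0$ no matter what $u_0$ is. The plan is: (i) prove global solvability of \eqref{e2.1} with $x_1'(t)\in(u_2,u_1)$ and with concentration on both sides for all $t>0$; (ii) integrate \eqref{e2.1} in closed form to get \eqref{3.1}--\eqref{3.33}.

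For (i): since $N(0)=m_0>0$, the vector field $x_1'=P/N$ (with $P$ the right-hand side above) is smooth near $t=0$, so Picard--Lindel\"of gives a unique local $C^\infty$ solution. For the velocity bound I would run a continuity/bootstrap argument: on a maximal interval $[0,T^*)$ where the solution exists and $x_1'\in(u_2,u_1)$, the functions $u_1t-x_1(t)$ and $x_1(t)-u_2t$ vanish at $t=0$ and are nondecreasing there, so $m_1(t),m_2(t)\ge0$, whence $N=m_1+m_0+m_2\ge m_0>0$; writing \eqref{2.6} as $x_1'=\frac{m_1u_1+m_0u_0+m_2u_2}{m_1+m_0+m_2}$ exhibits $x_1'$ as a convex combination of $u_1,u_0,u_2\in[u_2,u_1]$ in which $u_0$ carries the strictly positive weight $m_0$, so $x_1'-u_2\ge m_0(u_0-u_2)/N>0$ and $u_1-x_1'\ge m_0(u_1-u_0)/N>0$. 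Hence $x_1'$ never reaches the endpoints of $(u_2,u_1)$, $x_1$ grows at most linearly, and $N\ge m_0$ rules out any singularity, so $T^*=+\infty$; in particular $m_1(t),m_2(t)>0$ for $t>0$, i.e. mass concentrates on both sides for all time. Convergence of $x_1'$ as $t\to+\infty$ will be immediate once the closed form in (ii) is in hand.

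For (ii), the crucial observation is that although \eqref{2.7} is implicit, $N$ satisfies a trivial ODE: from \eqref{e2.1} one has $N x_1''=(\rho_1-\rho_2)(x_1')^2-2(\rho_1u_1-\rho_2u_2)x_1'+(\rho_1u_1^2-\rho_2u_2^2)$, while $N'=(\rho_1u_1-\rho_2u_2)-(\rho_1-\rho_2)x_1'$, and a one-line computation gives $(N')^2+NN''=(\rho_1u_1-\rho_2u_2)^2-(\rho_1-\rho_2)(\rho_1u_1^2-\rho_2u_2^2)=\rho_1\rho_2(u_1-u_2)^2$, that is, $\frac{\mathrm{d}^2}{\mathrm{d}t^2}\big(N(t)^2\big)\equiv 2\rho_1\rho_2(u_1-u_2)^2$. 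Integrating twice with $N(0)=m_0$ and $N'(0)=\rho_1(u_1-u_0)-\rho_2(u_2-u_0)$, and using $N>0$, yields exactly the square root appearing in \eqref{3.1}. If $\rho_1\ne\rho_2$, solving the definition of $N$ for $x_1$ gives \eqref{3.1}; then $x_1'=\big[(\rho_1u_1-\rho_2u_2)-N'\big]/(\rho_1-\rho_2)$ with $N'(t)=\big(\rho_1\rho_2(u_1-u_2)^2t+m_0[\rho_1(u_1-u_0)-\rho_2(u_2-u_0)]\big)/N(t)\to\sqrt{\rho_1\rho_2}\,(u_1-u_2)$ as $t\to+\infty$, and rationalizing the resulting limit produces \eqref{3.18a}. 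If $\rho_1=\rho_2=:\rho$, then $N(t)=\rho(u_1-u_2)t+m_0$ is already linear and \eqref{2.7} collapses to $(Nx_1)'=\rho(u_1-u_2)(u_1+u_2)t+m_0u_0$; integrating with $x_1(0)=0$ and decomposing into partial fractions gives \eqref{3.2}, hence \eqref{3.33}.

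The one genuinely nontrivial step is spotting that $\frac{\mathrm{d}^2}{\mathrm{d}t^2}(N^2)$ is constant; without it one is forced to integrate the implicit ODE \eqref{2.7} by less transparent substitutions. The remaining work is routine but needs care: making the bootstrap in (i) airtight (persistence of $m_1,m_2\ge0$ and non-attainment of $u_1,u_2$), and carrying out the surd simplification in \eqref{3.18a} and the partial-fraction rearrangement in \eqref{3.2}. Finally, the other five cases are not treated afresh: Cases~2--6 reduce to Case~1 (and to the $\rho_2=0$ specialization of Theorem~\ref{t2.2}) via Galilean boosts, the reflection $x\mapsto-x$, and the hand-off at the catch-up time described in Section~\ref{sec23}, or one simply invokes the delta-shock formulas of \cite[pp.~67--72]{lizhangyang} together with the equivalence of Theorem~\ref{thm2.3}.
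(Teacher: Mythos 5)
Your proposal is correct and hits the paper's central trick exactly: introducing $N(t)=m_0+(\rho_1u_1-\rho_2u_2)t-(\rho_1-\rho_2)x_1(t)$ (the paper writes $w(t)-m_0=-N(t)$, $f:=(w-m_0)^2=N^2$) and observing that $\tfrac{\dd^2}{\dd t^2}(N^2)\equiv 2\rho_1\rho_2(u_1-u_2)^2$, which is precisely the paper's sequence (3.4)--(3.11). Your closed-form formulas, initial-slope computation $N'(0)=\rho_1(u_1-u_0)-\rho_2(u_2-u_0)$, sign choice via $N>0$, and the limits in \eqref{3.18a}, \eqref{3.33} all match.

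Two small differences, both improvements in rigor/clarity rather than new ideas. First, for the velocity bound $x_1'(t)\in(u_2,u_1)$ and for global existence, the paper simply reads the inequality off the explicit solution \eqref{3.17}; you instead run an a priori continuity/bootstrap argument via the convex-combination form $x_1'=(m_1u_1+m_0u_0+m_2u_2)/(m_1+m_0+m_2)$, which shows $m_1,m_2\ge0$ persist, $N\ge m_0>0$, and $x_1'$ stays strictly interior -- this is more conceptual and does not presuppose the closed form. Second, for $\rho_1=\rho_2$, the paper integrates the second-order ODE \eqref{3.21} once via the integrating factor $\omega^2$; you go back to the first-order relation \eqref{2.7} and notice it collapses to $(Nx_1)'=\rho(u_1-u_2)(u_1+u_2)t+m_0u_0$ since $N$ is already affine -- one integration fewer, same answer. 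Both differences are cosmetic; the decisive step (constant $\tfrac{\dd^2}{\dd t^2}(N^2)$) is identical.
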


\subsection{Cases 2 and 3: $u_{1}>u_0$, $u_{2}>u_0$}
For Case 3, we know the physical picture is as follows:  there is a moment $t_1$, such that if $0< t<t_1$, there is a vacuum near the right side of the piston; as the flow on the left accelerates the piston,  it chases the constant moving flow $U_2=(\rho_2,u_2)^\top$ on the right, and at $t_1$ the piston meets $U_2$ and then concentration on the right side of the piston occurs; what happens next is the same as in Case 1. For Case 2, the story is a little bit different: the piston could never catch up with the right state $U_2$ and there is always vacuum near its right side. The following theorem verifies this.
\begin{theorem}\label{t3.2}
For the free piston problem \eqref{1.1}-\eqref{eq5}, suppose that $u_{1}>u_0$, $u_{2}>u_0$, then it admits a global solution.

More specifically, for Case 2: $u_0<u_{1}<u_{2}$, there is always vacuum near the right surface of the piston, namely the piston will never catch up with the right-side moving  flows with state $(\rho_2, u_2)^{\top}$, and the trajectory  of the left surface of the piston  is
\begin{equation}\label{3.36}
 x_{1}(t)=\dfrac{-\sqrt{2m_{0}\rho_{1}(u_{1}-u_{0})t+m_{0}^{2}}}{\rho_{1}}+u_{1}t +\dfrac{m_{0}}{\rho_{1}}.
 \end{equation}
The limiting velocity of the piston is
\begin{equation}\label{3.36a}
 \lim_{t\rightarrow+\infty}x_{1}'(t)=u_{1}.
 \end{equation}

For Case 3: $u_0<u_{2}<u_{1}$, $x_1(t)$ is given by
\begin{equation}\label{3.36a}
% \begin{split}
 x_{1}(t)=\begin{cases}
           \dfrac{-\sqrt{2m_{0}\rho_{1}(u_{1}-u_{0})t+m_{0}^{2}}}{\rho_{1}}+u_{1}t+\dfrac{m_{0}}{\rho_{1}}, &\mbox{if~}0\leq t\leq t_{1},\\
           \tilde{x}_1(t-t_1)+u_2t_1,&\mbox{if~}t_1<t,
          \end{cases}
 \end{equation}
where $\displaystyle t_1\doteq\frac{2m_0(u_2-u_0)}{\rho_1(u_1-u_2)^2}$, and $\tilde{x}_1(\cdot)$ is the function given by \eqref{3.1} if $\rho_1\neq \rho_2$, and by \eqref{3.2} if $\rho_1= \rho_2$, while in both of which,  $m_0$ is replaced by $\alpha (t_1)$ defined in \eqref{2.47} and $u_0$ is replaced by $\displaystyle \frac{2u_1u_2-u_0u_2-u_0u_1}{u_1+u_2-2u_0}$. The limiting velocity of the piston is given by \eqref{3.18a} and \eqref{3.33} respectively.
\end{theorem}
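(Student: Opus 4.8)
The plan is to reduce, via Theorems~\ref{t2.1}--\ref{t2.2}, the assertions to an explicit integration of the ODEs \eqref{e2.1}--\eqref{e2.2}, and then to splice together the two regimes that occur in Case~3. First I would treat Case~2 together with the initial stage of Case~3, where the trajectory obeys \eqref{e2.2}. Putting $w(t)\doteq\rho_1 x_1(t)-\rho_1 u_1 t-m_0$ (so $w'=\rho_1 x_1'-\rho_1 u_1$ and $w''=\rho_1 x_1''$), a short computation turns \eqref{e2.2} into $w w''+(w')^2=0$, that is $\bigl(\tfrac12(w^2)'\bigr)'=0$; hence $w^2$ is affine in $t$, and the data $x_1(0)=0$, $x_1'(0)=u_0$ give $w(0)=-m_0$ and $(w^2)'(0)=2m_0\rho_1(u_1-u_0)$, so $w(t)^2=2m_0\rho_1(u_1-u_0)t+m_0^2$. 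Since $u_1>u_0$ this is strictly positive for every $t\ge0$, so $w$ never vanishes and, being initially negative, stays negative; taking the negative root and undoing the substitution gives \eqref{3.36}, and then $x_1'(t)=u_1-m_0(u_1-u_0)/\sqrt{2m_0\rho_1(u_1-u_0)t+m_0^2}$ increases monotonically from $u_0$ up to the limit $u_1$. In Case~2, where $u_1\le u_2$, this forces $x_1'(t)<u_2$ for all $t$, hence $x_1(t)<u_2 t$ for $t>0$, so the piston's right face $x_1(t)+l$ never reaches the left edge $u_2 t+l$ of the right-hand gas: the vacuum persists for all time and the asymptotic speed is $u_1$, which establishes the Case~2 claims, global existence being evident because \eqref{3.36} is defined for all $t$.

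For Case~3 ($u_0<u_2<u_1$) the formula \eqref{3.36} is valid only up to the first contact instant $t_1$, determined by $x_1(t_1)=u_2 t_1$. Substituting \eqref{3.36} into this relation, rearranging, and squaring (the bracket $\rho_1(u_1-u_2)t+m_0$ is positive) leaves a quadratic in $t$ whose nonzero root is $t_1=2m_0(u_2-u_0)/\bigl(\rho_1(u_1-u_2)^2\bigr)$. At $t_1$, using $x_1(t_1)=u_2 t_1$, the total ``piston $+$ accreted'' mass equals $\alpha(t_1)=-\rho_1 x_1(t_1)+\rho_1 u_1 t_1+m_0=m_0(u_1+u_2-2u_0)/(u_1-u_2)$, which is the quantity \eqref{2.47} with $\tilde{\rho}_2=0$; and, since $\sqrt{2m_0\rho_1(u_1-u_0)t_1+m_0^2}=\rho_1(u_1-u_2)t_1+m_0=\alpha(t_1)$, the contact velocity works out to $x_1'(t_1)=u_1-(u_1-u_0)(u_1-u_2)/(u_1+u_2-2u_0)=(2u_1u_2-u_0u_1-u_0u_2)/(u_1+u_2-2u_0)$, which a one-line sign check places strictly between $u_2$ and $u_1$.

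Then I would restart the problem at $t_1$: with $\tau=t-t_1$ and $\tilde{x}_1(\tau)\doteq x_1(\tau+t_1)-u_2 t_1$ (so $\tilde{x}_1(0)=0$), the configuration at $\tau=0$ is exactly a Case~1 free-piston problem --- uniform gas $(\rho_1,u_1)$ on the left, uniform gas $(\rho_2,u_2)$ on the right, and a piston of mass $\alpha(t_1)$ moving with velocity $x_1'(t_1)\in(u_2,u_1)$, precisely the over-compressive range required there --- so Theorem~\ref{t3.1} applies after replacing $m_0$ by $\alpha(t_1)$ and $u_0$ by $x_1'(t_1)$, yielding the post-$t_1$ branch of the Case~3 trajectory (through \eqref{3.1} or \eqref{3.2}) and, via \eqref{3.18a}--\eqref{3.33}, the asymptotic speed; global existence follows since those formulas are defined for all $\tau$.

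I expect the real work to be bookkeeping rather than conceptual: confirming that $g(t)\doteq u_2 t-x_1(t)$ is strictly positive on $(0,t_1)$ and vanishes exactly at $t_1$ (so that a genuine vacuum opens and closes precisely once in Case~3), that the spliced trajectory is $C^1$ across $t_1$ with only $x_1''$ jumping there, and --- the one point that truly needs care --- that the mass and momentum accreted before $t_1$ are transferred correctly, i.e.\ that the shifted Case~1 datum $(\alpha(t_1),x_1'(t_1))$ makes the post-$t_1$ trajectory also solve \eqref{e2.1} with the original $m_0,u_0$; this last identity reduces, after expanding \eqref{2.7} in the shifted variables, to the momentum balance \eqref{2.6} evaluated at $t_1$.
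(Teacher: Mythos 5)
Your proposal is correct and follows essentially the same route the paper (in its omitted appendix proof) uses: substitute $w=\rho_1x_1-\rho_1u_1t-m_0$ to reduce \eqref{e2.2} to $ww''+(w')^2=0$, integrate to get \eqref{3.36}, observe the monotone increase of $x_1'$ to the limit $u_1$ to settle Case~2, locate $t_1$ from $x_1(t_1)=u_2t_1$, verify $u_2<x_1'(t_1)<u_1$, and restart as a Case~1 problem with mass $\alpha(t_1)$ and incoming speed $x_1'(t_1)$. The only thing you add beyond the paper's terse treatment is the explicit consistency remark that the shifted Case~1 datum $(\alpha(t_1),x_1'(t_1))$ encodes the cumulative mass and momentum balance \eqref{2.6} at $t_1$ (so the spliced trajectory is $C^1$ and still solves the original \eqref{2.7} for $t>t_1$) --- a useful point the paper leaves implicit.
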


\subsection{The other cases}
As discussed in Section \ref{sec23}, Case 4  and Case 5 ($u_{1}<u_0$, $u_{2}<u_0$) are in essence the same as Cases 2 and 3 studied above. So we will not repeat the results any more.
For the last Case 6, namely $u_{1}\leq u_0\leq u_{2}$,
the pressureless gas moves uniformly away from the piston, and there are vacuum near the piston and the piston's trajectory is given by $x_1(t)=u_0t,\ \forall \, t\ge0$.
The solution for this case is trivial.

\begin{acknowledgements}

The authors are grateful to Professor Jiequan Li, for pointing out the connection between delta shock and free piston problem in a private communication, which consists the main contribution of this manuscript. He also gave us valuable and detailed suggestions and corrections on an earlier draft. We thank him sincerely for his generosity and encouragements!
\end{acknowledgements}

% Authors must disclose all relationships or interests that
% could have direct or potential influence or impart bias on
% the work:
%
% \section*{Conflict of interest}
%
% The authors declare that they have no conflict of interest.

% BibTeX users please use one of
%\bibliographystyle{spbasic}      % basic style, author-year citations
%\bibliographystyle{spmpsci}      % mathematics and physical sciences
%\bibliographystyle{spphys}       % APS-like style for physics
%\bibliography{}   % name your BibTeX data base

% Non-BibTeX users please use

\end{document}